\documentclass[11pt,a4paper]{article}
\usepackage[T1]{fontenc}
\usepackage[utf8]{inputenc}
\usepackage{amsmath,amssymb,amsthm}
\usepackage{graphicx}
\usepackage{booktabs}
\usepackage{multicol}
\usepackage[margin=2.4cm]{geometry}

\usepackage{lmodern}
\usepackage{microtype}
\usepackage[hidelinks]{hyperref}
\theoremstyle{plain}
\newtheorem{theorem}{Theorem}[section]
\newtheorem{proposition}[theorem]{Proposition}
\newtheorem{lemma}[theorem]{Lemma}
\newtheorem{conjecture}[theorem]{Conjecture}
\newtheorem{problem}[theorem]{Problem}
\newtheorem*{theoremA}{Theorem A}
\newtheorem*{theoremB}{Theorem B}
\theoremstyle{definition}
\newtheorem{definition}[theorem]{Definition}
\newtheorem{construction}[theorem]{Construction sketch}
\title{Towards Strongly Aperiodic Monotiles\\ in Higher Dimensions}
\author{Dmitry Kamenetsky\\ \texttt{dkamenetsky@gmail.com}}
\date{September 30, 2026}
\begin{document}
\maketitle
\begin{abstract}
The discovery of Chair44 (Tsiokos \cite{tsiokos}, September 2026; re-presented and simplified by Goodman-Strauss \cite{gs2026} and Flicker \cite{flicker}) settled the three-dimensional einstein problem by exhibiting a strongly aperiodic polyhedral monotile in $\mathbb{R}^3$. This note studies the extension of the underlying mechanism---the rep-$2^N$ chair $C_N = [0,2]^N \setminus (1,2]^N$ with corner/socket markings---to $\mathbb{R}^N$. Besides the expository material (the explicit rep-$2^N$ dissection, and a conditional strong-aperiodicity theorem under lattice registration and hierarchical enforcement), the note contains a new computational contribution. We introduce a \emph{frame-marking} formalism in which the marking of the tile is its full orientation frame and the matching rule is the contact language generated by the substitution itself; this makes the search for matching rules a finite problem in every dimension. We give a finite certificate (coarsening closure, tightness, and a two-shell enclosure analysis) whose validity implies that every lattice-registered tiling by the marked tile is uniquely hierarchical, hence strongly aperiodic. For $N=3$ the certificate passes: it produces explicit facet matching rules on the 24 panels of $C_3$ (135 admissible facet-contact triples) and reproduces, from first principles and independently of the published constructions, the Chair44 statistics 2388 $\to$ 44 admissible contacts (30 of which occur), 33 one-shell clusters, 15 extendable, each forcing a unique complete supertile. We also show that among the 2187 homochiral frame assignments of the 3D substitution with a translated central child, the certified one is unique up to conjugation. For $N=4$ the same pipeline is run on several structured families of frame assignments (canonical, $D_4$-, $Z_2 \times Z_2$- and $Z_4$-symmetric, and a lift of the 3D solution); none is coarsening-closed, and we report the exact failure data. The existence of a self-similar marking of $C_4$ therefore remains an explicitly finite, open computational problem, which we state precisely. All code is available at \href{https://github.com/dimkadimon/Monotile-RN}{github.com/dimkadimon/Monotile-RN} (Python 3, standard library plus NumPy for figures).
\end{abstract}
\noindent\emph{Author contribution.} Dmitry Kamenetsky proposed the research inquiry into generalising the Chair44 mechanism to higher dimensions and verified the manuscript. The AI agent on Arena.ai carried out the literature synthesis, the formal derivations, the design and implementation of the certificate software, the computations, and the writing.

\section{Introduction}

An aperiodic set of prototiles is a finite collection of shapes that can tile Euclidean space without gaps or overlaps, but only non-periodically. Such sets in the plane go back to Berger \cite{berger} (20,426 Wang tiles), Robinson \cite{robinson} (6 tiles) and Penrose \cite{penrose} (2 tiles). The \emph{einstein} problem asks for a single connected shape that enforces aperiodicity by its boundary alone. In the plane it was resolved in 2023 by the Hat \cite{hat} and the chiral Spectre \cite{spectre}. In the plane every weakly aperiodic protoset is strongly aperiodic (Grünbaum--Shephard \cite{gs87}, Thm. 3.7.1). The only known matching-rule enforcement of the planar chair (L-tromino) substitution uses two prototiles (Goodman-Strauss's trilobite and cross \cite{gs99b}); no chair-based planar einstein is known.

In three dimensions the Schmitt--Conway--Danzer biprism \cite{schmitt,danzer} tiles without translational periods but admits screw motions, hence is only weakly aperiodic. Goodman-Strauss \cite{gs99} constructed in 1999 an aperiodic \emph{pair} of tiles enforcing the notched-cube rep-tiling in $\mathbb{R}^N$ for all $N \ge  3$. In September 2026 Tsiokos announced Chair44 \cite{tsiokos}, with independent re-presentations by Goodman-Strauss \cite{gs2026} and Flicker @flicker: the first strongly aperiodic monotile in $\mathbb{R}^3$. In high dimensions Greenfeld and Tao \cite{gt} proved that aperiodic monotiles tiling by translations alone exist for some large $d$ (the tile being a disconnected subset of $\mathbb{Z}^d$). The constructive question of strongly aperiodic \emph{polyhedral} monotiles in $\mathbb{R}^N$ for every $N \ge  4$ is open.

\section{Contributions and relation to the literature}\label{sec:contrib}

\textbf{Background in plain terms.} A \emph{matching rule} is a set of instructions saying which tiles may be placed next to which, and in which orientations. For the chair, a rule is "good" if the only way to obey it everywhere is to build the tiles up, level by level, into ever larger copies of the chair (\emph{supertiles}). Once that is known, aperiodicity follows by a standard argument (Theorem~\ref{thm:cond}). Chair44 is a chair whose surface is shaped so that the shape itself acts as such a rule. What is known about it: Tsiokos \cite{tsiokos} found the shape; Goodman-Strauss \cite{gs2026} explained why it works by reading colours off its corners and sockets; Flicker \cite{flicker} checked an equivalent rule by computer (two chairs can touch in 2388 ways, the rule allows 44, and among the 33 ways to surround one chair only 15 can be continued, each around the same supertile). For dimensions $N \ge  4$ nothing analogous is known: Goodman-Strauss's 1999 construction \cite{gs99} needs \emph{two} different tiles, and his 2026 notes remark only that the chair shape and its self-similarity generalise.

This note adds the following. Each item is tagged \textbf{new} (not in the literature to our knowledge) or \textbf{known} (included for completeness).

\begin{enumerate}
\item \textbf{A way to obtain the rule from the tile itself} (Section~\ref{sec:frames}) --- \emph{new}. The idea of deriving rules from a substitution is classical (Goodman-Strauss \cite{gs98} proves that every suitable substitution admits \emph{some} marked tile set), but the specific formalism---full-frame marking, hierarchical language, coarsening closure as the self-similarity test---and its use for a \emph{single} tile are new. Instead of inventing a rule and checking it, we let the tile remember its orientation ("frame") and declare admissible exactly those contacts that appear when supertiles are built, at all levels. The only choices to be made are the orientations of the $2^N - 1$ outer children inside a supertile. For every $N$ this is a finite list of candidates, and each candidate can be tested by computer.

\item \textbf{A finite test that proves the rule works} (Proposition~\ref{prop:cert}) --- \emph{new} as a general, dimension-independent statement; its $N = 3$ instance (C3) is the kind of check performed in \cite{flicker}. Three checks---(C1) the rule reproduces itself one level up, (C2) it can be phrased face by face, (C3) every way to surround a tile either cannot be continued or forces one specific complete supertile---together imply that every tiling obeying the rule is built from supertiles at every level. The proof is a short induction on the level. Flicker's enumeration \cite{flicker} is essentially check (C3) for $N = 3$; checks (C1) and (C2) are what remove the need for any further computation at higher levels.

\item \textbf{Theorem A} --- the \emph{result} (a strongly aperiodic marked chair in $\mathbb{R}^3$) is \emph{known} \cite{tsiokos,gs2026,flicker}; \emph{new} are the explicit facet rules $F^*$ derived from the frame formalism, the independent verification, and the two-second certificate. For $N = 3$ the test passes. It produces an explicit rule for the 24 square faces of the chair (135 admissible ways for two faces to meet) and reproduces every published number for Chair44---2388, 44, 30, 33, 15, one supertile---although neither published rule was used as input. The whole computation takes about two seconds. We regard this as an independent verification of the Chair44 mechanism at the level of matching rules (not of the specific polyhedron).

\item \textbf{Theorem B: there is no other choice} --- \emph{new}. Among all $3^7 = 2187$ possible orientation choices for the seven outer children (with the central child a plain translate, and all children rotations rather than reflections), exactly three pass check (C1), and they are the same choice up to renaming the coordinate axes. In other words, the Chair44 marking is forced by self-similarity. This has not been observed before.

\item \textbf{Four dimensions} (Section~\ref{sec:N4}) --- \emph{new} (negative computational results and a precise open problem). The same programs run on the 4-dimensional chair. All the "obvious" generalisations of the 3D choice---7712 candidates from several symmetric families, and a direct lift of the 3D solution---fail check (C1), always in the same way. What remains is a precisely stated finite problem (Problem~\ref{prob:N4}): a search over $12^15$ candidates, each testable in two seconds. We give the software; we do not claim a 4-dimensional monotile.

\item \textbf{Expository material and a conditional theorem} --- \emph{known}. The explicit self-similar dissection of the $N$-dimensional chair (Section~\ref{sec:dissect}) and Theorem~\ref{thm:cond} (a tiling that is built from supertiles at every level, and whose tiles sit on the cubic lattice, has no translational or screw symmetries and at most $2^{N-1} N!$ symmetries in all) are included to make the note self-contained; we claim no novelty for them.
\end{enumerate}

\section{Preliminaries and definitions}

Let $\mathbb{R}^N$ carry the standard metric. A \emph{tiling} $\mathcal{T} = {T_i}_{i \in I}$ of $\mathbb{R}^N$ is a locally finite collection of closed topological $N$-balls covering $\mathbb{R}^N$ with pairwise disjoint interiors, each isometric to a prototile in a finite set $\mathcal{P}$. Its automorphism group is $\mathrm{Aut}(\mathcal{T}) = {\gamma \in \mathrm{Isom}(\mathbb{R}^N) : \gamma(\mathcal{T}) = \mathcal{T}}$.

\begin{definition}[periodic, weakly and strongly aperiodic]
A tiling is \emph{periodic} if $\mathrm{Aut}(\mathcal{T})$ contains a full-rank translation lattice; it \emph{has a period} if it is invariant under some non-zero translation. A protoset $\mathcal{P}$ is \emph{weakly aperiodic} if it admits tilings but none with a compact fundamental domain, and \emph{strongly aperiodic} if it admits tilings and no admitted tiling has an infinite cyclic symmetry subgroup (so no translations and no screw motions) \cite{gs2026,gs87}. A single prototile $Q$ is a strongly aperiodic monotile if ${Q}$ is strongly aperiodic.
\end{definition}

Throughout, $B_N = {\pm 1}^N \rtimes S_N$ denotes the hyperoctahedral group (signed permutation matrices), $B_N^+$ its rotation subgroup, and $U_N = {0,1}^N \setminus {(1,\dots,1)}$.

\section{The $N$-dimensional chair polytope}\label{sec:polytope}

\begin{definition}
For $N \ge  2$ let $C_N := [0,2]^N \setminus (1,2]^N = \bigcup_{v \in U_N} (v + [0,1]^N)$.
\end{definition}

\begin{lemma}[metric and boundary properties]
For every $N \ge  2$: (1) $\mathrm{Vol}(C_N) = 2^N - 1$; (2) $C_N$ is star-shaped with respect to the origin and homeomorphic to the closed $N$-ball; (3) $\partial C_N$ is partitioned into exactly $N \cdot 2^N$ unit $(N-1)$-dimensional \emph{panels}, and $C_N$ has $3N$ maximal flat facets.
\end{lemma}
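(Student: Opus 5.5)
Part (1) is immediate from the union description: the $2^N-1$ unit cubes $v+[0,1]^N$, $v\in U_N$, have pairwise disjoint interiors, so $\mathrm{Vol}(C_N)=|U_N|=2^N-1$. Equivalently, $2^N-1$ is the volume of $[0,2]^N$ minus that of the removed open corner cube $(1,2]^N$.

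For (2) I would show that for $x\in C_N$ and $t\in[0,1]$ we have $tx\in C_N$. Membership in $C_N$ means $x\in[0,2]^N$ and $x\notin(1,2]^N$, i.e.\ some coordinate satisfies $x_i\le 1$. Both conditions are preserved under scaling by $t\in[0,1]$, because $0\le tx_j\le 2$ and $tx_i\le x_i\le 1$. To get a ball, I use the fact that a compact set that is star-shaped about an interior point and whose radial function is continuous and positive is homeomorphic to the closed ball, via radial projection onto a sphere. The origin is a boundary point of $C_N$, so I would take as center an interior point $p$ such as $(\tfrac12,\dots,\tfrac12)$ and check star-shapedness with respect to $p$. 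For $x\in C_N$, the segment from $p$ to $x$ stays in $[0,2]^N$. If $x_i\le 1$, then the $i$-th coordinate along the segment stays $\le 1$, since $p_i=\tfrac12$. Moreover, every ray from $p$ leaves $C_N$ exactly once, because $C_N$ is cut out by conditions that are monotone along the ray. The radial function is then the minimum of a continuous function (exit from the box) and another continuous function (exit through the face $x_i=1$ for the smallest coordinate direction). I expect this continuity check to be the only mildly delicate point. A cleaner route is to write $C_N$ as a union of convex sets $\{x\in[0,2]^N: x_i\le 1\}$, $i=1,\dots,N$, each containing $p$ in its interior. The radial function is then the maximum of the $N$ continuous radial functions of these convex bodies, which is continuous and positive.

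For (3) I would count boundary panels cube by cube. A face of the cube $v+[0,1]^N$ in direction $\pm e_i$ lies on $\partial C_N$ exactly when the neighbouring unit cube $v\pm e_i$ is not among the $2^N-1$ cubes. This happens either because the neighbour leaves $[0,2]^N$, or because the neighbour is the removed cube $(1,\dots,1)$. Consider the first case. For each $i$ and each sign there are $2^{N-1}$ positions $v$ with $v_i$ at the outer extreme, minus those equal to $(1,\dots,1)$. This gives $2N\cdot 2^{N-1}-N=N2^N-N$ outer panels, since the missing cube removes $N$ outer faces in the $+e_i$ directions. The second case adds the $N$ panels of the socket, namely the faces of cubes $(1,\dots,1)-e_i$ facing the removed cube. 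The total is $N2^N$.

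For the maximal flat facets I group panels by supporting hyperplane. The hyperplanes $x_i=0$ give $N$ full square facets $[0,2]^{N-1}$. The hyperplanes $x_i=2$ give $N$ L-shaped facets, each of which is the box face minus a unit corner. The hyperplanes $x_i=1$ give $N$ socket facets. This yields $3N$ facets, each connected and maximal because it is a connected region of a single hyperplane bounded by ridges of $C_N$.
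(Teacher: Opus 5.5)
Your proposal is correct, and for part (3) it uses the same count as the paper: $2N\cdot 2^{N-1}$ panels of the box $[0,2]^N$, minus the $N$ outer panels lost with the corner cube, plus the $N$ socket panels it exposes, with facets grouped as $N$ full, $N$ notched and $N$ socket faces. The paper gives no argument for (1) and (2). Your volume count and your treatment of (2) fill that gap: you note that the origin is only a boundary point, take the interior centre $(\tfrac12,\dots,\tfrac12)$ instead, and write $C_N=\bigcup_i\{x\in[0,2]^N : x_i\le 1\}$, so the radial function is a maximum of continuous positive ones.
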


\begin{proof}
A $2^N$ cube has $2N$ hyperfaces of $2^{N-1}$ unit panels each, i.e. $N\,2^N$ panels. Removing the unit corner cube removes $N$ external panels and exposes $N$ internal ones facing the socket, so the count is unchanged. The $3N$ facets are the $2N$ faces of the box, $N$ of them notched, plus the $N$ socket faces.
\end{proof}

For $N = 2$ the chair is the L-tromino (3 cells, 8 edges); $C_3$ has 7 cubes and 24 square panels (Figure~\ref{fig:chair}); $C_4$ has 15 cells and 64 cubic panels.

\begin{figure}[t]\centering\includegraphics[width=0.9\linewidth]{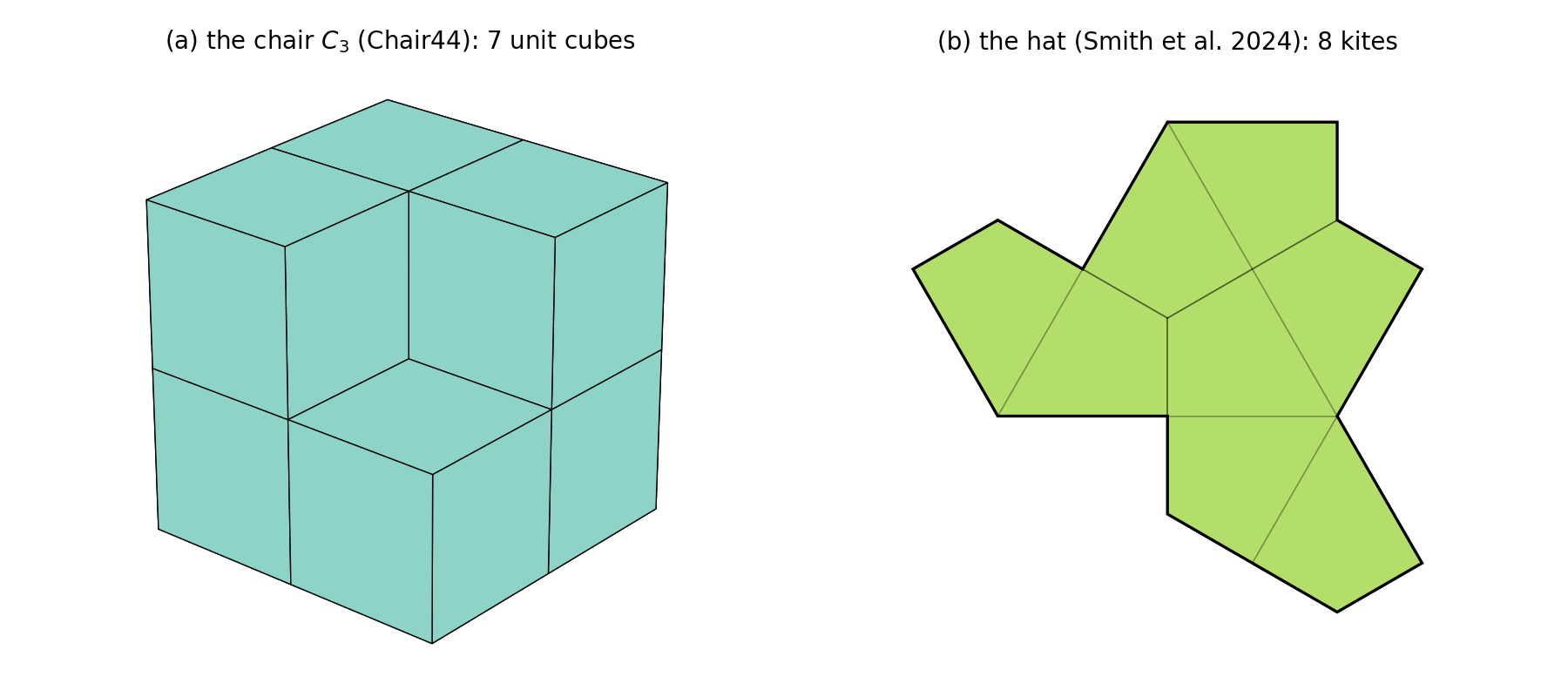}
\caption{Two aperiodic monotiles. (a) The three-dimensional chair $C_3$, the shape of the Chair44 tile: the cube $[0,2]^3$ with the unit corner cube $(1,2]^3$ removed (seven unit cubes). (b) For comparison, the planar hat of Smith, Myers, Kaplan and Goodman-Strauss \cite{hat}, a union of eight kites of the hexagonal kite grid; thin lines mark the kites.}\label{fig:chair}\end{figure}

\section{The rep-$2^N$ dissection (classical)}\label{sec:dissect}

The self-similarity of the corner-deleted hypercube is classical (Golomb \cite{golomb}; Goodman-Strauss \cite{gs99,gs2026}). We record the explicit coordinates because the frame assignment of Section~\ref{sec:frames} refers to them.

\begin{theorem}[rep-$2^N$ dissection]
For every $N \ge  2$ the scaled tile $2C_N = [0,4]^N \setminus (2,4]^N$ is the union, with disjoint interiors, of $2^N$ congruent copies of $C_N$:
\[2C_N = T_0 \cup \bigcup_{v \in U_N} T_v, \quad T_0 = C_N + (1,\dots,1), \quad T_v = 2v + D_{s(v)} C_N + c(v),\]
where $D_{s(v)}$ is the diagonal matrix with entries $s_i = -1$ if $v_i = 1$ and $+1$ otherwise, and $c(v)$ is the translation placing the reflected tile inside the block $B_v = 2v + [0,2]^N$ (explicitly $c(v)_i = 2$ if $v_i = 1$ and $0$ otherwise).
\end{theorem}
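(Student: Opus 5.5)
The plan is to reduce everything to a bookkeeping of closed unit cells. Both $C_N$ and $2C_N$ are unions of closed unit lattice cubes: $C_N = \bigcup_{u \in U_N}(u + [0,1]^N)$ by the definition, and $2C_N = \bigcup_{w \in W}(w + [0,1]^N)$ with $W = \{0,1,2,3\}^N \setminus \{2,3\}^N$. So it suffices to show two things. First, each piece $T_0, T_v$ is a union of cells $w+[0,1]^N$ with $w \in W$. Second, every $w \in W$ lies in exactly one piece. Disjointness of interiors and the equality of the unions then follow, since distinct lattice cells have disjoint interiors. Congruence is immediate: $D_{s(v)}$ is an orthogonal (reflection) matrix and the rest are translations, so each $T_v$ is an isometric copy of $C_N$. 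Here "congruent" allows improper isometries, and the homochiral version is a separate issue handled in Section~\ref{sec:frames}.

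\emph{Step 1: identify $T_v$.} Work coordinatewise. If $v_i = 1$, the map $x_i \mapsto -x_i + 2$ sends $[0,2]$ to itself and the removed interval $(1,2]$ to $[0,1)$. If $v_i = 0$, nothing changes. Hence $D_{s(v)}C_N + c(v)$ is $[0,2]^N$ minus the open-ish unit corner cube at the corner $\mathbf{1} - v$. After translating by $2v$, $T_v$ is the block $B_v$ minus the single unit cell of $B_v$ touching the centre point $(2,\dots,2)$. That cell is precisely $B_v \cap [1,3]^N$, i.e.\ the cell $w$ with $w_i = 2$ if $v_i=1$ and $w_i=1$ if $v_i=0$. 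So $T_v$ consists of the $2^N-1$ cells of $B_v$ other than its central cell.

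\emph{Step 2: identify $T_0$.} Similarly, $T_0 = C_N + \mathbf{1}$ is the union of the cells of $[1,3]^N$ except the cell $(2,\dots,2)+[0,1]^N$. Each of the $2^N$ cells of $[1,3]^N$ is the central cell of exactly one block $B_v$, $v \in \{0,1\}^N$. So $T_0$ is exactly the union of the central cells of the blocks $B_v$ with $v \in U_N$.

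\emph{Step 3: count.} The cells in $W$ are exactly the cells of the blocks $B_v$ with $v \in U_N$, since the block $B_{\mathbf{1}}$ is $[2,4]^N$. Each such cell is either the central cell of its block, and then it lies in $T_0$ only, or a non-central cell, and then it lies in $T_v$ only. This gives the partition, and the volume check $2^N(2^N-1) = \mathrm{Vol}(2C_N)$ is consistent with Lemma (1).

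The only step needing care is Step 1: verifying that the reflected copy's missing cell is the one adjacent to the centre, and handling the half-open boundary conventions. I would avoid the latter entirely by treating all sets as closures of unions of closed cells, which is how $C_N$ is written in its definition.
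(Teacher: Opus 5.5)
Your proposal is correct and follows essentially the same route as the paper: split $2C_N$ into the blocks $B_v$ ($v\in U_N$), observe that $T_0$ occupies exactly the cell of each $B_v$ adjacent to $(2,\dots,2)$, and check that $D_{s(v)}$ followed by $c(v)$ and $2v$ moves the socket of $C_N$ onto precisely that cell, so that $T_v$ fills the rest of $B_v$. Your cell-by-cell bookkeeping makes explicit the partition and interior-disjointness that the paper's final count leaves implicit.
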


\begin{proof}
Subdivide $2C_N$ into the $2^N - 1$ blocks $B_v$, all meeting at $p_0 = (2,\dots,2)$. The central child $T_0 = [1,3]^N \setminus (2,3]^N$ lies in $2C_N$ since its missing corner $(2,3]^N \subset (2,4]^N$, and it meets each block $B_v$ in exactly one unit cube $u_v = 2v + (1-v) + [0,1]^N$. The remainder $B_v \setminus T_0$ is a $2^N$ cube missing the corner cube at $2v + (1-v)$, i.e. the corner of $B_v$ pointing towards $p_0$; reflecting $C_N$ by $D_{s(v)}$ moves its socket to that corner. Counting gives $1 + (2^N - 1) = 2^N$ children.
\end{proof}

Note that the reflection $D_{s(v)}$ has determinant $(-1)^{|v|}$; composing it with a coordinate permutation of the right parity yields a proper rotation, so the substitution can be realised homochirally. Which permutation is chosen is exactly the freedom exploited below.

\section{Frame markings and the contact language}\label{sec:frames}

Fix $N$ and identify the unit cubes of $C_N$ with their centres $2u + 1$, $u \in U_N$ (we work at twice the scale so that all coordinates are integers). A \emph{pose} is a pair $(g, t)$ with $g \in B_N$ and $t \in \mathbb{Z}^N$; the tile in pose $(g,t)$ occupies the cubes $g(2u+1) + 2t$. We only consider \emph{lattice-registered} placements: all tiles are images of $C_N$ under $B_N$ and integer translations. (This is hypothesis (2) of Theorem~\ref{thm:cond}; for the published Chair44 it is enforced by the polyhedral shape \cite{tsiokos,flicker}, and the counts "2388 contacts" in \cite{flicker} are also lattice-registered counts.)

\textbf{Marked tile.} The marked tile is $C_N$ together with its frame $g$: two placements of the marked tile are distinguished whenever their frames differ. The \emph{relative pose} of a tile $B = (g_B, t_B)$ with respect to $A = (g_A, t_A)$ is $\mathrm{rel}(A,B) = (g_A^{-1} g_B, g_A^{-1}(t_B - t_A))$. Two tiles \emph{touch} if they are interior-disjoint and share at least one panel. A \emph{pose set} $P$ is a set of relative poses of touching tiles; a lattice-registered tiling is a \emph{$P$-tiling} if the relative pose of every touching pair lies in $P$.

\textbf{Frame assignment and substitution.} A \emph{frame assignment} is a choice $\pi_0, (\pi_v)_{v \in U_N}$ of coordinate permutations; the level-1 supertile in pose $(G, T)$ consists of the children
\[(G \pi_0, T + G(1,\dots,1)), \quad (G D_{s(v)} \pi_v, T + 4 G v) \quad (v \in U_N),\]
and level-$k$ supertiles are defined recursively. We call the assignment \emph{homochiral} if all $D_{s(v)} \pi_v$ and $\pi_0$ are rotations. Throughout we take $\pi_0 = \mathrm{\mathrm{id}}$ (the central child is a translate, as in \cite{gs2026}).

\textbf{The hierarchical language.} Let $P_1$ be the set of relative poses of touching children inside one supertile, and for a pose $p$ let $X(p)$ be the set of relative poses of touching children of two supertiles in relative pose $(g, 2t)$ when $p = (g,t)$. The \emph{hierarchical language} $P_0$ is the least fixed point $P_0 = P_1 \cup X(P_0)$; it is finite (all poses are bounded) and is exactly the set of contacts occurring in hierarchical tilings (all levels). Every hierarchical tiling is a $P_0$-tiling.

\textbf{Coarsening.} For a pose set $P$ let $\Sigma(P)$ be the set of \emph{pairwise admissible supertile poses}: poses $(G,T)$ of a level-1 supertile touching the reference supertile such that all $2^N \times 2^N$ child pairs are interior-disjoint and every touching pair has relative pose in $P$. Such a pose is \emph{aligned} if $T \in 2 \mathbb{Z}^N$ and an \emph{offset} otherwise. The \emph{coarsening closure} $P^*$ of $P_0$ is obtained by iterating $P_{i+1} = P_i \cup {(G, T/2) : (G,T) \in \Sigma(P_i) \mathrm{aligned}}$ until it stabilises; it exists only if no offset pose ever becomes admissible. We say the assignment is \emph{coarsening-closed} if $P^*$ exists and $\Sigma(P^*) = 2P^*$ (equality, not just inclusion): the rule reproduces itself exactly one level up.

\textbf{Facet rules and tightness.} Number the $N\,2^N$ panels of $C_N$. A pose $p \in P$ induces the set $F(p)$ of \emph{facet triples} $(a, b, g)$: panel $a$ of the reference tile meets panel $b$ of the tile in pose $p = (g, t)$. Let $F^* = \cup_{p \in P^*} F(p)$ be the \emph{facet rules} and $P_{F^*}$ the set of touching poses all of whose facet triples lie in $F^*$. Clearly $P^* \subseteq P_{F^*}$; the rule is \emph{tight} if $P_{F^*} = P^*$. When tight, "$P^*$-tiling" is a genuinely local condition: each facet contact is checked separately against $F^*$, so $F^*$ can be realised by keys and locks on the panels (Section~\ref{sec:poly}). Note that $F^*$ constrains the relative \emph{orientation} $g$ of the two facing panels, not just their identities---this is what an asymmetric relief on each panel encodes.

\section{A finite certificate for unique hierarchy}\label{sec:cert}

Let $T$ be a tile in a $P^*$-tiling. An \emph{enclosure} of $T$ is the set of tiles covering its $N\,2^N$ panels. Given the finite set $P^*$, all combinatorially possible enclosures (pairwise $P^*$-compatible sets of neighbours covering every panel) can be enumerated. An enclosure is \emph{dead} if the patch $T \cup E$ admits no legal second shell (no set of tiles covering all panels of all tiles of the patch, pairwise compatible), and \emph{live} otherwise; deadness is decided by an exhaustive finite search. A level-1 supertile $S$ is \emph{compatible} with a patch if every child of $S$ either belongs to the patch or is interior-disjoint from and pairwise compatible with all tiles of the patch.

\begin{proposition}[finite certificate]\label{prop:cert}
Let a frame assignment be given and suppose:

\textbf{(C1)} it is coarsening-closed: $P^*$ exists and $\Sigma(P^*) = 2P^*$;

\textbf{(C2)} the facet rules are tight: $P_{F^*} = P^*$;

\textbf{(C3)} for every live enclosure $E$ of a tile $T$: (U) exactly one level-1 supertile containing $T$ is compatible with $T \cup E$, and (Pr) for every enclosure $E'$ of its central child $T_0(S)$ compatible with $T \cup E$, all children of $S$ belong to $T \cup E \cup E'$ (if $T = T_0(S)$, all children already belong to $T \cup E$).

Then every lattice-registered $F^*$-tiling $\mathcal{T}$ of $\mathbb{R}^N$ decomposes, for every $k \ge  1$, into a unique partition into complete level-$k$ supertiles, and the partition of level $k+1$ refines to that of level $k$.
\end{proposition}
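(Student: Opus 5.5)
The plan is to prove the statement by induction on $k$, using (C2) to turn the local facet condition into the pose condition, (C3) to build level-1 supertiles, and (C1) to lift the whole situation one level up. First, by (C2), a lattice-registered $F^*$-tiling $\mathcal{T}$ is a $P^*$-tiling. Indeed, each touching pair has all its facet triples in $F^*$, so its relative pose lies in $P_{F^*} = P^*$.

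\textbf{Level 1.} Take a tile $T \in \mathcal{T}$. Its actual enclosure $E$ in $\mathcal{T}$ is a pairwise $P^*$-compatible set covering all panels of $T$. The actual second shell of $T \cup E$ witnesses that $E$ is live. By (C3)(U), exactly one level-1 supertile $S(T)$ containing $T$ is compatible with $T \cup E$. Next we show that every child of $S(T)$ actually belongs to $\mathcal{T}$. For this we apply (C3) to the central child $T_0 = T_0(S(T))$, which lies in $T \cup E$ (or equals $T$). The actual enclosure $E'$ of $T_0$ in $\mathcal{T}$ is compatible with $T \cup E$, being part of the same tiling. By (Pr), all children of $S(T)$ lie in $T \cup E \cup E' \subseteq \mathcal{T}$, so $S(T)$ is a complete supertile of $\mathcal{T}$.

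Consistency comes next. If $T' \in S(T)$, then $S(T)$ is a level-1 supertile containing $T'$ whose children all lie in $\mathcal{T}$, hence it is compatible with $T' \cup E(T')$. Uniqueness in (U) then gives $S(T') = S(T)$. Thus the sets $S(T)$ partition $\mathcal{T}$. The same uniqueness shows the partition is the only one into complete level-1 supertiles: any complete supertile containing $T$ is compatible with $T \cup E$, so it must be $S(T)$.

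\textbf{Induction.} Let $\mathcal{T}_1$ be the collection of level-1 supertiles, rescaled by $1/2$. The rescaled supertiles are lattice-registered copies of $C_N$ with frames $(G, T/2)$; the point to verify is that $T$ lies in $2\mathbb{Z}^N$ or in the appropriate lattice coset, so that the rescaled tiles are registered. The main work is showing that $\mathcal{T}_1$ is again a $P^*$-tiling. Any two touching supertiles have all child pairs interior-disjoint with relative poses in $P^*$. Their relative pose therefore lies in $\Sigma(P^*)$, which by (C1) equals $2P^*$. In particular no offset pose occurs, which gives the registration claim, and the rescaled relative pose lies in $P^*$. Also $P^* \subseteq P_{F^*}$, so $\mathcal{T}_1$ is an $F^*$-tiling. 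Applying the level-1 result to $\mathcal{T}_1$ yields a unique partition into level-2 supertiles, and the same argument continues for every $k$. Refinement holds by construction, since a level-$(k+1)$ supertile is a union of its level-$k$ children. Uniqueness at level $k$ follows because any partition into level-$k$ supertiles induces a level-1 partition, which is unique, and then level by level the partitions are forced.

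\textbf{Main obstacle.} I expect the delicate step to be the coarsening step. There we must ensure that two touching supertiles do touch as supertiles, sharing a panel at the coarse scale whenever some children touch, and that the inclusion $\Sigma(P^*) \subseteq 2P^*$ exactly covers all such pairs. A secondary subtlety is whether compatibility of $E'$ with $T \cup E$ is precisely the hypothesis (Pr) needs; I would check this directly from the definitions, using that all tiles come from the one tiling $\mathcal{T}$.
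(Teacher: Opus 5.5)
Your proposal is correct and follows the paper's proof essentially step for step. You use (C2) to pass from $F^*$- to $P^*$-tilings, liveness of the actual enclosure with (U), and (Pr) applied to the actual enclosure of $T_0(S(T))$ to get a unique complete level-1 supertile, uniqueness again to show these supertiles partition $\mathcal{T}$, and $\Sigma(P^*) = 2P^*$ to rescale and induct on $k$; your extra remarks only make explicit what the paper leaves implicit (that $T_0(S(T))$ lies in $T\cup E$, true because the central child shares a panel with every outer child so compatibility forces it into the patch, the coset check for registration, and the uniqueness argument at level $k$).
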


\begin{proof}
By (C2) an $F^*$-tiling is the same thing as a $P^*$-tiling. Let $T \in \mathcal{T}$. Its actual enclosure $E$ in $\mathcal{T}$ is one of the enumerated ones and is live (the tiling itself is a second shell), so by (U) there is exactly one supertile $S(T)$ containing $T$ compatible with $T \cup E$; since the true supertile containing $T$---if any---must be compatible with everything present, at most one exists. By (Pr), applied to the actual enclosure of $T_0(S(T))$ in $\mathcal{T}$, all children of $S(T)$ are present in $\mathcal{T}$. Hence every tile lies in a complete level-1 supertile, unique by (U). If $T'$ is another child of $S(T)$, then $S(T)$ is a compatible supertile containing $T'$, so $S(T') = S(T)$ by uniqueness. Thus the complete supertiles partition $\mathcal{T}$.

Two touching supertiles of this partition have all their touching child pairs in $P^*$, so their relative pose lies in $\Sigma(P^*) = 2P^*$ by (C1). Scaling by $1/2$, the supertiles form a lattice-registered $P^*$-tiling by marked copies of $C_N$ (a supertile in pose $(G,T)$ becomes the tile $(G,T/2)$; note that the scaled tile's marking is again a frame, which is why closedness had to be an equality). Induction on $k$ gives the unique nested hierarchy.
\end{proof}

The proposition supplies hypothesis (3) of Theorem~\ref{thm:cond}, and (C1) also gives hypothesis (1) (hierarchical tilings exist as limits of supertiles and are $P_0 \subseteq P^*$-tilings). Every condition is a finite computation once $N$ and the frame assignment are fixed.

\textbf{On soundness of the implementation.} Deadness is established by a depth-first search over completions of the second shell that returns "dead" only after exhausting all branches; a node limit is monitored and was never reached in the runs reported below. An optional inference rule (adding the children of a uniquely decodable enclosed tile during the search) was implemented but is switched off for the certified runs, since it presupposes (Pr); with and without it the results are identical.

\section{The three-dimensional case: an independent certificate}\label{sec:N3}

\begin{figure}[t]\centering\includegraphics[width=1.0\linewidth]{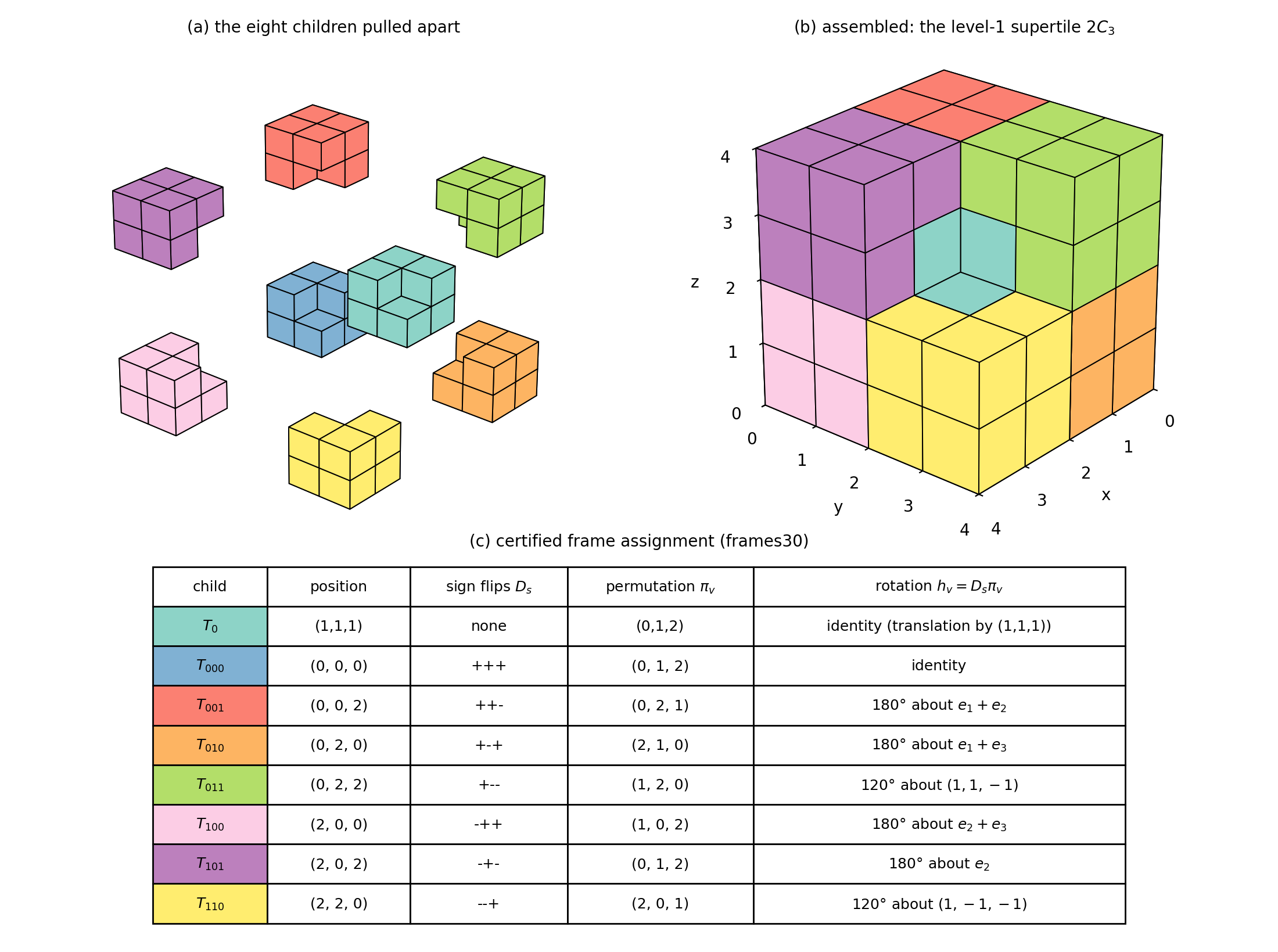}
\caption{The level-1 supertile $2C_3$ and its certified frame assignment. (a) The eight children pulled apart radially from the centre, so that every chair is visible, including the child $T_{000}$ (blue) which is hidden in the assembled supertile. (b) The same eight chairs assembled into $2C_3$, viewed from the socket side; the central translate $T_0$ (teal) is visible at the bottom of the socket. (c) The certified frame assignment. "Position" is the corner $2v$ of the $2 \times 2 \times 2$ block $B_v = 2v + [0,2]^3$ occupied by the child, i.e. the $(x,y,z)$ coordinates of the lowest corner of its bounding cube in units of unit cubes (for $T_0$ it is the translation vector $(1,1,1)$). Permutations are written as tuples $\pi = (\pi(0), \pi(1), \pi(2))$ acting on 0-indexed coordinates; the sign flips are $D_{s(v)}$. All eight child maps are proper rotations (homochiral).}\label{fig:sup}\end{figure}

\begin{theoremA}[verified matching rules for the 3D chair]
For the homochiral frame assignment of Figure~\ref{fig:sup}, conditions (C1)--(C3) hold. Consequently, with $F^*$ the induced facet rules (135 facet-contact triples on the 24 panels of $C_3$), every lattice-registered $F^*$-tiling of $\mathbb{R}^3$ is uniquely hierarchical, every such tiling is strongly aperiodic, and $|\mathrm{Aut}(\mathcal{T})| \le  24$ for each of them.
\end{theoremA}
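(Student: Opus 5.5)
The proof splits into a finite computational part, which establishes (C1)--(C3) for the assignment of Figure~\ref{fig:sup}, and a short deductive part that invokes Proposition~\ref{prop:cert} and Theorem~\ref{thm:cond}.

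\textbf{Computational part.} Fix the eight child maps of Figure~\ref{fig:sup}. I would first check that every $D_{s(v)}\pi_v$ has determinant $+1$, and that the children tile $2C_3$ with disjoint interiors; the latter follows from the rep-$2^N$ dissection theorem, since the permutations only relabel frames within each block.

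Next I would compute $P_1$ by listing all touching child pairs inside one supertile. Iterating $P\mapsto P_1\cup X(P)$ until it stabilises gives $P_0$.

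To obtain $P^*$, I would run the coarsening iteration. For each candidate supertile pose $(G,T)$ touching the reference supertile, with $G\in B_3$ (48 elements) and $T$ in a bounded box, test interior-disjointness and membership of all $8\times 8$ touching child pairs in $P_i$. The run must confirm three things: no offset pose is ever admissible, the iteration stops, and at the end $\Sigma(P^*)=2P^*$ holds as an equality. This is (C1).

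For (C2), I would read off $F^*$, which should have 135 triples. I would then enumerate all lattice-registered touching poses; there are 2388 of them, matching the contact count of \cite{flicker}. From these I would extract $P_{F^*}$ and check $P_{F^*}=P^*$, with $|P^*|=44$.

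For (C3), I would enumerate all enclosures of a reference tile by depth-first search, covering the 24 panels in a fixed order with pairwise compatible poses from $P^*$; this should yield 33 enclosures. For each one I would run the exhaustive second-shell search to classify it as dead or live; 15 should be live. For every live enclosure I would list the level-1 supertiles containing $T$ and check that exactly one is compatible, which is (U). For (Pr), I would enumerate the enclosures $E'$ of the central child of that supertile that are compatible with $T\cup E$, and check that every child of the supertile lies in $T\cup E\cup E'$.

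\textbf{Deductive part.} With (C1)--(C3) established, Proposition~\ref{prop:cert} shows that every lattice-registered $F^*$-tiling is uniquely hierarchical. Existence of such tilings comes from (C1): nested supertiles have a limit, and since $P_0\subseteq P^*$ this limit is a $P^*$-tiling, hence an $F^*$-tiling by (C2).

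Theorem~\ref{thm:cond} then applies, because lattice registration, existence, and unique hierarchy are its hypotheses (2), (1) and (3). It rules out translations and screw motions, so every such tiling is strongly aperiodic. It also gives $|\mathrm{Aut}(\mathcal T)|\le 2^{N-1}N! = 24$ for $N=3$.

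\textbf{Main obstacle.} I expect the hardest step to be the soundness of the deadness search in (C3). A dead verdict is only valid if every branch was genuinely exhausted, so the search must not prune using unproven inferences. In particular the optional inference rule must stay off, since it presupposes (Pr). I would also record node counts to show that the node limit was never hit.

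A secondary risk is getting the bounding box for $T$ in $\Sigma$ right. It has to be large enough to contain every touching supertile pose, which I would justify from the diameter of $2C_3$.
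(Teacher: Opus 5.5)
Your pipeline is the paper's own. You run the same computations for (C1)--(C3), get the same counts, and then invoke Proposition~\ref{prop:cert} and Theorem~\ref{thm:cond}. There is one real gap, in the final step.

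\textbf{The missing step.} Theorem~\ref{thm:cond} gives $|\mathrm{Aut}(\mathcal T)|\le 2^N N! = 48$ in general. It gives $2^{N-1}N! = 24$ only under the extra hypothesis that all tiles of $\mathcal T$ have the same handedness, and you never verify this. Your check $\det(D_{s(v)}\pi_v)=+1$ concerns the substitution, not an arbitrary $F^*$-tiling. Lattice registration does not supply it either, since the 2388 bare contacts include improper relative frames.

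What you need is that every relative frame $g$ in $P^*$ is a proper rotation. This includes the 14 poses added by coarsening, not just those of $P_0$. You can read it off the computation, or derive it:
\begin{itemize}
\item $P_0$ consists of rotations by induction, because relative frames of children have the form $(D_{s(u)}\pi_u)^{-1} g\, D_{s(v)}\pi_v$ with $\pi_0=\mathrm{id}$.
\item If $P_i$ consists of rotations, then every $(G,T)\in\Sigma(P_i)$ has a touching child pair with relative frame $(D_{s(u)}\pi_u)^{-1} G\, D_{s(v)}\pi_v\in P_i$. This forces $\det G=+1$.
\end{itemize}

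Each triple $(a,b,g)\in F^*$ records such a $g$, so touching tiles of an $F^*$-tiling have proper relative frames. The panel-adjacency graph of the tiling is connected, because a generic path between two tiles crosses boundaries only in panel interiors. Hence all frames share one determinant. An automorphism $x\mapsto Rx+v$ sends a frame $g$ to $Rg$, so $\det R=+1$ and $L(\mathrm{Aut}(\mathcal T))\subseteq B_3^+$. The bound $24$ follows from this.

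\textbf{Smaller points.}
\begin{itemize}
\item A $P^*$-tiling is an $F^*$-tiling directly by the definition of $F^*$. (C2) is the converse implication, and it is used inside Proposition~\ref{prop:cert}, not in the existence step.
\item For $\Sigma$, the paper avoids your bounding-box issue. It generates candidates from contacts: for each child $a$ of the reference supertile and each $p\in P_i$, it places $b$ in pose $p$ relative to $a$ and takes every supertile placement having $b$ as a child. This is complete because two touching supertiles always share a touching child pair, and that pair's pose must lie in $P_i$.
\item Your brute force over $B_3$ and a box of translations is equally valid, but only once the box is justified.
\end{itemize}
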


\begin{proof}
The proof is a finite computation, organised as in Proposition~\ref{prop:cert} (2388 touching poses $\to$ 44 admissible $\to$ 33 enclosures $\to$ 15 live $\to$ one complete supertile); it is carried out by \texttt{cert6.py} in the repository, and every intermediate object is stored in \texttt{cert6\_N3triv.pkl}.

\emph{(C1)} The hierarchical language is computed as the least fixed point $P_0 = P_1 \cup X(P_0)$; it stabilises at level 3 with $|P_0| = 30$. The pairwise admissible supertile poses are enumerated by generating, for every child $a$ of the reference supertile and every $p \in P_0$, the tile $b$ in relative pose $p$ to $a$, and every supertile placement having $b$ as one of its eight children; each candidate is kept if all $64$ child pairs are interior-disjoint and all touching pairs lie in $P_0$. This gives $|\Sigma(P_0)| = 44$ poses, all aligned; halving their translations yields $P_1 = P_0 \cup 14$ new poses. Repeating with $P_1$ gives $\Sigma(P_1) = 2P_1$ exactly, so $P^* = P_1$, $|P^*| = 44$, and no offset pose is ever admissible.

\emph{(C2)} All lattice-registered poses of a second tile touching the reference tile are enumerated (2388). For each, its facet triples are computed and compared with $F^* = \cup_{p \in P^*} F(p)$, which has 135 elements; exactly the 44 poses of $P^*$ pass, so $P_{F^*} = P^*$.

\emph{(C3)} Enclosures of the reference tile are enumerated by backtracking over the panels in a fixed order, choosing for the first uncovered panel one of the tiles in $P^*$ covering it, subject to pairwise compatibility; this produces each pairwise compatible cover of the 24 panels exactly once---33 in total. For each enclosure a depth-first search attempts to complete a second shell (cover every panel of every tile of the patch with pairwise compatible tiles from $P^*$); 18 enclosures are dead (the search exhausts all branches; the node limit of $5 \cdot 10^4$ was never approached) and 15 are live. For each live enclosure the supertile placements containing the reference tile as one of its eight children are tested for compatibility with the patch; exactly one is compatible in every case (U). In one case (the reference tile is the central child) all eight children already lie in the patch. In the other 14 cases, every enclosure of the central child of that supertile compatible with the patch contains all remaining children (Pr); no compatible enclosure fails this, so no further liveness test is needed.

Hence (C1)--(C3) hold and Proposition~\ref{prop:cert} applies: every lattice-registered $F^*$-tiling is uniquely hierarchical. Existence of tilings follows from (C1): the level-$k$ supertiles are $P_0$-patches, $P_0 \subseteq P^*$, and a limit of nested supertiles containing a fixed tile is an $F^*$-tiling. Since every $g$ occurring in $P^*$ is a rotation, all tiles of an $F^*$-tiling have the same handedness. Theorem~\ref{thm:cond} now gives strong aperiodicity and $|\mathrm{Aut}(\mathcal{T})| \le  2^2 \cdot 3! = 24$.
\end{proof}

The computation produces the data of Table~\ref{tab:N3}.

\begin{table}[t]\centering\small
\begin{tabular}{p{4.5cm}p{8cm}}\toprule
\textbf{quantity} & \textbf{value} \\ \midrule
hierarchical language $|P_0|$ & 30 \\
coarsening closure & $30 \to 44 \to 44$; closed after one step, no offset poses \\
$|P^*|$ and $|\Sigma(P^*)|$ & 44 and 44 ($= 2P^*$ exactly) \\
lattice-registered touching poses & 2388 \\
facet triples $|F^*|$; tightness $|P_{F^*}|$ & 135; 44 (tight) \\
enclosures of a tile & 33 \\
dead / live & 18 / 15 \\
live enclosures with exactly one compatible supertile & 15 of 15 \\
presence (Pr) checks & 14 (the 15th is $T = T_0$); 0 failures \\
DFS node limit reached & never \\
\bottomrule\end{tabular}
\caption{Data produced by the certificate for $N = 3$ (Theorem A).}\label{tab:N3}\end{table}

\textbf{Comparison with the published constructions.} Flicker \cite{flicker} reports for Chair44: 2388 bare contacts, 44 allowed by the arrow rules of which 30 occur in infinite tilings, 33 one-shell clusters, 15 admitting a second shell, all containing the same complete "Superchair". Goodman-Strauss \cite{gs2026} describes the same mechanism in Berger's style (every marked tile lies in a marked supertile with the same corner markings; supertiles meet only fully face-to-face; $|\mathrm{Aut}| \le  24$). Our numbers coincide term by term although our marking (the full frame) and our rule (the closed contact language) were derived without reference to either paper. We regard this as an independent verification of the Chair44 mechanism at the level of the combinatorial matching rules. What the present note does \emph{not} do is re-derive the specific polyhedral shape; see Section~\ref{sec:poly}.

\textbf{Strong aperiodicity.} $P^*$ consists of proper relative rotations only, so all tiles of an $F^*$-tiling share one handedness and $L(\mathrm{Aut}(\mathcal{T})) \subseteq B_3^+$; Theorem~\ref{thm:cond} then gives $|\mathrm{Aut}(\mathcal{T})| \le  24$ and the absence of translations and screw motions.

\begin{theoremB}[rigidity of the 3D marking]
Consider all $3^7 = 2187$ homochiral frame assignments of the eight children of $2C_3$ with $\pi_0 = \mathrm{\mathrm{id}}$. Exactly three of them are coarsening-closed with $|P^*| \le  300$, namely the assignment of Figure~\ref{fig:sup} and its two conjugates under cyclic permutation of the coordinates (the assignment is invariant under conjugation by the transposition of coordinates 0 and 2). For the other 2184 the closure iteration either produces admissible offset supertile poses (6 assignments) or exceeds 300 poses within two steps without having stabilised (2178 assignments).
\end{theoremB}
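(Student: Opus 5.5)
The proof is a finite exhaustive computation, organised so that the enumeration is provably complete and the outcome of each run is reproducible. We first fix the search space. With $\pi_0=\mathrm{id}$, each of the seven outer children $v\in U_3$ needs a permutation $\pi_v\in S_3$ with $\det(D_{s(v)}\pi_v)=+1$, that is, $\mathrm{sgn}(\pi_v)=(-1)^{|v|}$. Exactly three permutations have each sign (the even ones are the identity and the two 3-cycles; the odd ones are the three transpositions). Hence every child has exactly three choices, the homochiral assignments are in bijection with $\{0,1,2\}^7$, and there are $3^7=2187$ of them. We will enumerate them in this order, so that no assignment is skipped and none is counted twice.

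For each assignment we run the coarsening-closure pipeline of Section~\ref{sec:frames}, the same code used for (C1) in the proof of Theorem A. This proceeds in three steps:
\begin{enumerate}
\item compute $P_1$ and then the least fixed point $P_0=P_1\cup X(P_0)$;
\item iterate $P_{i+1}=P_i\cup\{(G,T/2)\}$, computing $\Sigma(P_i)$ by generating candidate supertiles from each child of the reference supertile and each pose in $P_i$;
\item classify the run into one of three outcomes: (a) an offset pose appears in some $\Sigma(P_i)$, in which case we stop with ``no closure''; (b) $|P_i|>300$ within two steps, in which case we stop with ``exceeds''; (c) the iteration stabilises with $\Sigma(P^*)=2P^*$, in which case the assignment is coarsening-closed.
\end{enumerate}
The theorem asserts the tallies $6$, $2178$ and $3$ for these outcomes. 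The bound $300$ and the two-step horizon are part of the statement, so for outcome (b) we need no argument about non-termination beyond the cap. We only have to record that the cap was hit before stabilisation, and that no offset pose appeared earlier; if one had, the assignment would instead be classified under (a).

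For the three surviving assignments, we check that one of them is the assignment of Figure~\ref{fig:sup}, and that the other two are its images under conjugation by the 3-cycle $\sigma$ of coordinates. Conjugating an assignment by $\sigma$ means relabelling the child at $v$ as the child at $\sigma v$ and replacing $\pi_v$ by $\sigma\pi_v\sigma^{-1}$. Conjugation by any coordinate permutation preserves homochirality, preserves $\pi_0=\mathrm{id}$, and maps $P_0$, $\Sigma$ and $P^*$ equivariantly, so coarsening-closedness is a conjugation invariant. This explains a priori why the survivors come as an orbit under $S_3$. We then verify directly that the transposition $(0\,2)$ fixes the Figure~\ref{fig:sup} assignment. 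The stabiliser therefore has order at least $2$, so the $S_3$-orbit has size at most $3$. The orbit is then exactly the three survivors, and the data confirm that these three are pairwise distinct. The same equivariance serves as a consistency check on the other two outcome classes: their counts $6$ and $2178$ must each be unions of $S_3$-orbits, which the data should confirm.

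The main obstacle is certifying correctness of the $\Sigma$ enumeration, since a missed candidate supertile pose could turn an offset failure into a false closure, or the reverse. We will guard against this in two ways. First, we argue that the generation is complete: any supertile touching the reference supertile has some touching child pair, and that pair's relative pose must lie in $P_i$, so generating candidates from every child and every pose in $P_i$ reaches every admissible supertile. Second, we cross-validate on the certified $N=3$ instance, where the pipeline must reproduce $30\to44\to44$ from Theorem A. Runtime is modest: $2187$ runs capped at roughly $300$ poses each.
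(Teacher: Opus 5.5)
Your proposal is correct and is essentially the paper's proof: run the coarsening-closure pipeline exhaustively over all $3^7$ homochiral assignments with the same two stopping rules (an admissible offset pose, or $|P_i|>300$ within two steps), then check directly that the three survivors are the assignment of Figure~\ref{fig:sup} and its cyclic conjugates, and that $(0\,2)$ fixes it. Your extras are sound justifications that the paper leaves implicit: the sign-parity count of the search space, the conjugation-equivariance argument showing each outcome class is a union of $S_3$-orbits, and the completeness argument for generating $\Sigma(P_i)$ from touching child pairs.
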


\begin{proof}
This is again a finite computation (\texttt{scan3.py}, \texttt{closure\_scan3.py}). For each of the $3^7$ assignments the hierarchical language $P_0$ and the coarsening iteration are computed as in the proof of Theorem A, with two termination rules: the iteration stops with failure as soon as an offset supertile pose is admissible (6 assignments), or as soon as $|P_i| > 300$ (2178 assignments; all of these have $|P_2| \ge  398$ after two steps, whereas closure requires $|P_{i+1}| = |P_i|$). The remaining three assignments close after one step with $|P^*| = 44$; they are the assignment of Figure~\ref{fig:sup} and its images under conjugation by the two cyclic coordinate permutations, and one checks directly that conjugation by the transposition $(0 2)$ fixes the assignment of Figure~\ref{fig:sup}. The cap is part of the statement: we do not exclude that a capped iteration stabilises at some value above 300, but such a rule would admit more than six times as many contacts as the certified one, and in all capped cases the growth from $|P_1|$ to $|P_2|$ is by a factor between 3 and 7.
\end{proof}

The three closed assignments are also singled out by the simplest statistic: the number $|\Sigma(P_0)|$ of pairwise admissible supertile poses of the hierarchical language takes the values $44, 62, 99, 103, 113, 114, 119, 158, 186$ over the 2187 assignments and the minimum $44$ is attained exactly by them (\texttt{scan3.py}, \texttt{closure\_scan3.py}). The uniqueness is a structural fact: self-similarity of the rule leaves no freedom in the marking, once the central child is a translate.

\begin{figure}[t]\centering\includegraphics[width=0.72\linewidth]{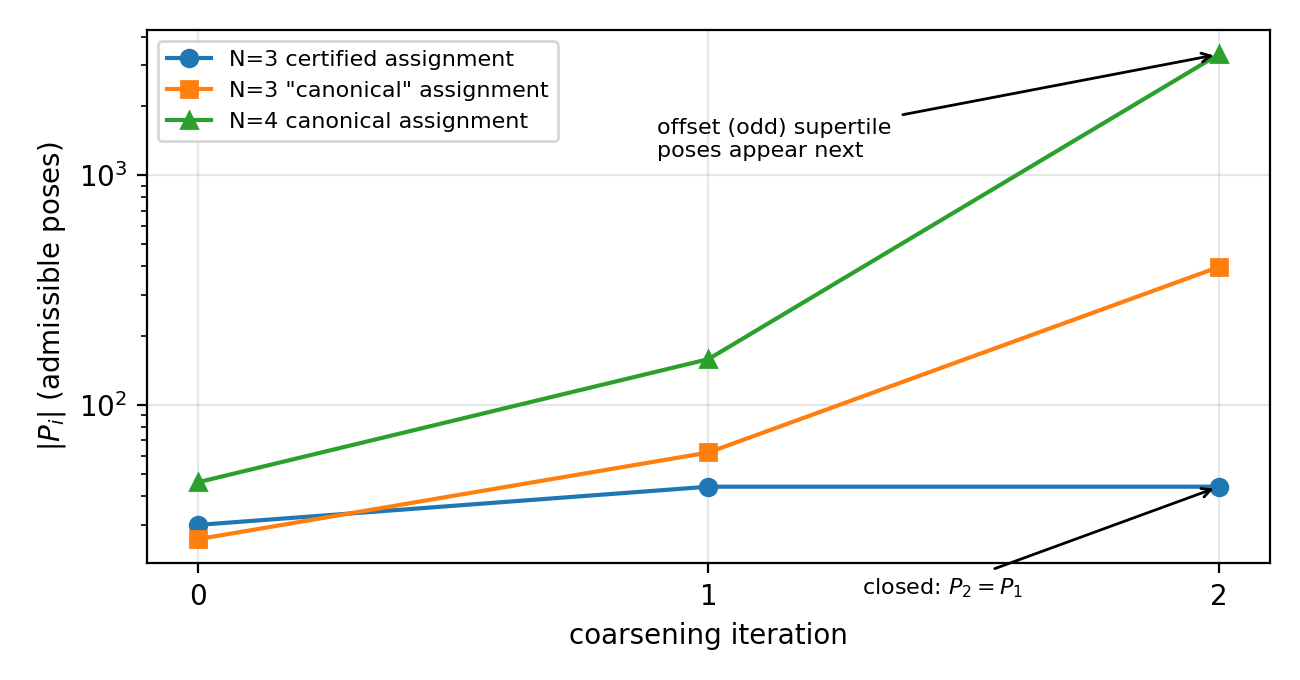}
\caption{Coarsening iteration $|P_i|$ for three assignments. The certified 3D assignment closes ($30 \to 44 \to 44$). The "canonical" assignments (sign flips composed with a fixed transposition where a rotation is required) do not: the closure grows and then admits offset supertile poses, in $N = 3$ and $N = 4$ alike.}\label{fig:closure}\end{figure}

\section{Four dimensions: what the pipeline shows}\label{sec:N4}

All code is dimension-independent; running it on $C_4$ ($16$ children, $64$ panels, $384$ frames) is a matter of seconds per frame assignment. Unlike $N = 3$, however, the assignment space is $12^15 \approx 1.5 \cdot 10^16$ homochiral choices (12 proper rotations per outer child), so exhaustive enumeration is impossible with our resources and we tested the structured families of Table~\ref{tab:N4}.

\begin{table}[t]\centering\small
\begin{tabular}{p{5.2cm}rp{3.2cm}p{4cm}}\toprule
\textbf{family} & \textbf{size} & \textbf{best $|\Sigma(P_0)|$ (min over family)} & \textbf{closure} \\ \midrule
canonical ($\pi_v = \mathrm{\mathrm{id}}$ or a fixed transposition, by parity) & 1 & 158 ($|P_0| = 46$) & $46 \to 158 \to 3354 \to$ offsets \\
$D_4$-conjugation-invariant & 32 & 158 & fails \\
$\langle (0 3), (1 2) \rangle$-invariant & 768 & 158 & fails \\
$Z_4$-invariant & 6912 & 158 & fails \\
suspension of the 3D solution ($\pi_v = \pi^{(3)}_{v'}$ or $(3 a) \pi^{(3)}_{v'}$) & 6 & 840 & fails \\
coordinate descent on $|\Sigma(P_0)|$, random starts & --- & plateau 1566 & fails \\
\bottomrule\end{tabular}
\caption{Families of frame assignments of $C_4$ tested for coarsening closure (C1). For the certified 3D assignment the corresponding value of $|\Sigma(P_0)|$ is $44 = 2|P_0| - 16$; all 4D families stay far above $2|P_0|$.}\label{tab:N4}\end{table}

Two facts about the failures are worth recording. First, in every case examined the \emph{first} coarsening step produces only aligned supertile poses; offsets appear only after the closure has grown (Figure~\ref{fig:closure}). In other words, the pairwise child rules do force supertiles to meet face-to-face, and what fails is that the enlarged rule no longer coincides with the original one---a failure of \emph{self-similarity} of the marking rather than of local rigidity. Second, the objective $|\Sigma(P_0)|$ has very large plateaus (value 1566 for generic assignments, 158 for the canonical-type ones), which defeats local search; in $N = 3$ the same plateau (value 158) contains 693 of the 2187 assignments, while the solution is an isolated minimum.

\begin{problem}[self-similar marking of $C_4$]\label{prob:N4}
Decide whether there is a homochiral frame assignment $(\pi_v)_{v \in U_4}$ of the 16 children of $2C_4$, with $\pi_0 = \mathrm{\mathrm{id}}$, that is coarsening-closed ((C1)); and, if so, whether (C2) and (C3) hold for it. A positive answer, by Proposition~\ref{prop:cert} and Theorem~\ref{thm:cond}, gives a marked strongly aperiodic monotile in $\mathbb{R}^4$ with $|\mathrm{Aut}| \le  192$.
\end{problem}

Goodman-Strauss \cite{gs2026} remarks that the shape and substitution of Chair44 generalise to all dimensions and that his 1999 construction \cite{gs99} gives an aperiodic \emph{pair} in $\mathbb{R}^N$; whether the single-tile marking generalises is exactly Problem~\ref{prob:N4}. Our Theorem B shows that in $N = 3$ the marking is forced, so the question is not one of choice but of existence. We note two natural reductions: (i) a solution invariant under conjugation by a subgroup $\Gamma \le  S_4$ can be enumerated over orbits (this is what \texttt{symsearch4.py} does; the $3$-dimensional solution has $\Gamma = Z_2$); (ii) condition (C1) can be tested on a candidate in about two seconds, so any conceptual proposal for the marking (for instance a corner-colouring rule in the style of \cite{gs2026}) can be checked immediately with the supplied code.

\begin{conjecture}[higher-dimensional polyhedral monotiles]
For each $N \ge  4$ there exists an unmarked polyhedral prototile $Q_N \subset \mathbb{R}^N$ based on $C_N$ satisfying hypotheses (1)--(3) of Theorem~\ref{thm:cond}.
\end{conjecture}

We no longer offer an "$N$-colour corner-cyclic grammar" as a proposed rule: the naive corner-colouring analogue we tested (colours at corner facet-germs, equality on coincident germs) does not even reproduce $P^*$ for $N = 3$ (it accepts 2016 of the 2388 bare contacts that carry no corner-to-corner coincidence at all). The frame formalism replaces it by a rule that is derived rather than guessed.

\section{Polyhedral realisation via keys and locks}\label{sec:poly}

Following the standard encoding technique (Goodman-Strauss \cite{gs98,gs2026}), a facet-pair relation such as $F^*$ can be converted into rigid boundary geometry.

\begin{construction}[keys and locks]
Let $F_j \subset \partial C_N$ ($j = 1, \dots, N\,2^N$) be the panels. (1) On each panel place an asymmetric $(N-1)$-dimensional relief of height $\varepsilon < 1/8$ (a protrusion on "key" panels, a matching depression on "lock" panels) whose shape depends on the panel index. (2) Choose the reliefs so that panel $a$ of one tile fits flush against panel $b$ of another, in relative orientation $g$, if and only if $(a, b, g) \in F^*$, and so that no partial (off-lattice) overlaps of reliefs are possible. (3) Set $Q_N := (C_N \setminus \bigcup_{\mathrm{locks}} P_j^-) \cup \bigcup_{\mathrm{keys}} P_j^+$.
\end{construction}

\begin{figure}[t]\centering\includegraphics[width=1.0\linewidth]{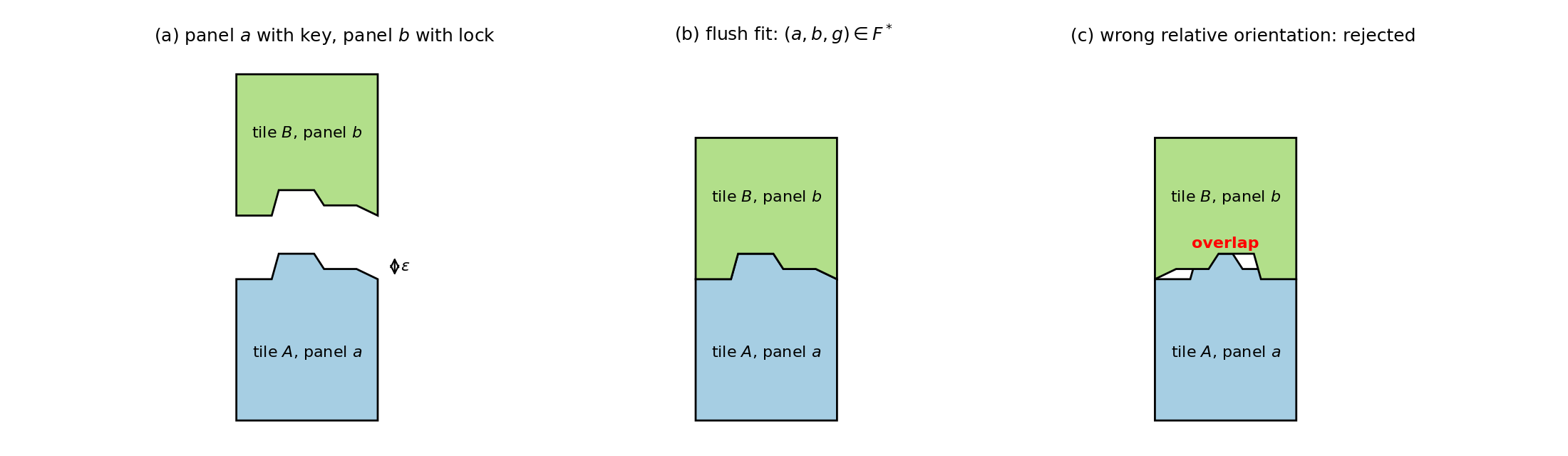}
\caption{Keys and locks in cross-section. (a) An asymmetric relief of height $\varepsilon$ on panel $a$ (key) and its complement on panel $b$ (lock). (b) The two panels fit flush only in the relative orientations $g$ for which $(a,b,g) \in F^*$. (c) In any other relative orientation the reliefs overlap, so the placement is impossible.}\label{fig:keys}\end{figure}

Figure~\ref{fig:keys} illustrates the construction. Step (2) is where the actual work lies; for $N = 3$ the published Chair44 shape \cite{tsiokos} is such a realisation of an equivalent rule set (Flicker \cite{flicker} shows the arrow rules and Chair44 are equivalent). We do not claim a new polyhedron here: Theorem A is a statement about a \emph{marked} tile with facet matching rules, and lattice registration is an assumption in our model that a polyhedral realisation must enforce by shape.

\section{Conditional strong aperiodicity}\label{sec:cond}

\begin{theorem}[strong aperiodicity under hierarchical enforcement and lattice registration]\label{thm:cond}
Let $Q_N \subset \mathbb{R}^N$ be a prototile (possibly marked) and assume:
(1) $Q_N$ admits tilings of $\mathbb{R}^N$ containing arbitrarily large supertiles;
(2) every admitted tiling is lattice-registered, so that $L(\mathrm{Aut}(\mathcal{T})) \subseteq B_N$ for the linear part $L$;
(3) every admitted tiling decomposes into a unique nested supertile hierarchy ${\mathcal{S}_k}_{k \ge  0}$ with inflation factor 2.
Then for every admitted tiling $\mathcal{T}$: $\mathrm{Aut}(\mathcal{T})$ contains no non-zero translation and no element of infinite order; the homomorphism $L : \mathrm{Aut}(\mathcal{T}) \to O(N)$ is injective; hence $|\mathrm{Aut}(\mathcal{T})| \le  |B_N| = 2^N N!$, and $\le  2^{N-1} N!$ if all tiles have the same handedness.
\end{theorem}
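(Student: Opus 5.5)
The plan is the standard Berger/Goodman-Strauss argument: uniqueness of the hierarchy makes it invariant under every symmetry, so every symmetry permutes level-$k$ supertiles for all $k$. Supertiles grow without bound, and this pins each symmetry near a fixed point.

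\emph{Step 1 (symmetries respect the hierarchy).} Let $\gamma\in\mathrm{Aut}(\mathcal{T})$. Then $\gamma(\mathcal{T})=\mathcal{T}$, and $\{\gamma(\mathcal{S}_k)\}$ is again a nested supertile hierarchy of $\mathcal{T}$ with inflation factor 2. By hypothesis (3) it coincides with $\{\mathcal{S}_k\}$, so $\gamma$ maps each level-$k$ supertile onto a level-$k$ supertile. A level-$k$ supertile is a copy of $2^kC_N$, with diameter $2^k\,\mathrm{diam}(C_N)$. Hypothesis (1) is only needed to guarantee that admitted tilings exist; once (3) holds, every level $k$ is present in every admitted tiling.

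\emph{Step 2 (no translations, no infinite order).} Suppose $\gamma$ is a translation by $\tau\neq 0$. For fixed $k$, the supertiles $\gamma^m(S)$ of a level-$k$ supertile $S$ either are all equal or form a sequence of supertiles in the partition $\mathcal{S}_k$. I will use the point $p_0$, the reentrant socket vertex of a supertile, at which it has a distinguished marked position. Take $k$ with $2^k\gg|\tau|$ and a level-$k$ supertile $S$ containing the origin deep inside; concretely, choose nested supertiles so that the origin is at distance $>|\tau|$ from $\partial S$, which is possible for some $k$ unless the origin lies on boundaries at all levels, a case I handle by choosing the base point generically. Then $\gamma(S)\cap S\neq\emptyset$ with nonempty interior, so $\gamma(S)=S$ by the partition property. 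An isometry preserving the compact set $S$ fixes its centroid and so cannot be a nonzero translation, a contradiction.

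The same argument treats any $\gamma$: pick a level-$k$ supertile $S$ whose interior contains a point $x$ with $|\gamma(x)-x|$ less than the inradius scale of $S$. Then $\gamma(S)=S$, so $\gamma$ fixes the centroid $c_k$ of $S$. Since $\gamma$ preserves a bounded set, it has finite order, ruling out screw motions and all infinite cyclic subgroups.

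\emph{Step 3 (injectivity of $L$).} If $L(\gamma)=L(\gamma')$, then $\gamma^{-1}\gamma'$ is a translation, hence trivial by Step 2. So $L$ embeds $\mathrm{Aut}(\mathcal{T})$ into $B_N$ by hypothesis (2), giving $|\mathrm{Aut}|\le 2^NN!$. If all tiles share one handedness, no symmetry can be orientation-reversing, because it would map a tile to a tile of opposite handedness. The image therefore lies in $B_N^+$, of order $2^{N-1}N!$.

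\emph{Main obstacle.} The delicate point is the choice in Step 2 of a supertile overlapping its image in interior. Supertiles at growing levels need not have the origin deep inside, since a point may sit on supertile boundaries at all levels (the fault-line phenomenon). To get around this I would choose a point $x$ in the interior of a tile and let $S_k(x)$ be the level-$k$ supertile containing it. Each $S_k(x)$ contains a ball of radius proportional to $2^k$, but $x$ itself may stay near the boundary. Rather than rely on $x$, I would use the fact that $\gamma(S_k(x))$ and $S_k(x)$ both contain the point sets near $x$ and $\gamma(x)$, which are at bounded distance $d$. Since $S_k(x)$ is a union of a $2^{N}$-adic family of nested sub-blocks, for large $k$ the level-$(k-1)$ decomposition shows that a neighbourhood of $x$ of radius $d$ meets only boundedly many level-$k$ supertiles. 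If I cannot make this fully rigorous directly, I would instead use the classical route: a translational period $\tau$ maps the level-$k$ partition to itself. The set of translation vectors between equivalent points of the level-$k$ lattice-registered supertile centres lies in $2^k\mathbb{Z}^N$ up to a bounded offset, because supertile poses are $(G,2^kT)$ plus a fixed offset. This forces $\tau\in 2^k\mathbb{Z}^N$ for all $k$, hence $\tau=0$. For screw motions, $\gamma^{m}$ is a translation for some $m\le|B_N|$, since $L(\gamma)$ has finite order in $B_N$, and so $\gamma^m=\mathrm{id}$.
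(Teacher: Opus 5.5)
Your final sentence, replacing $\gamma$ by $\gamma^m$ with $m$ the order of $L(\gamma)\in B_N$ so that $\gamma^m(x)=x+w$ is a translation, is exactly how the paper proceeds. It is not optional, because the argument you actually give for general $\gamma$ (``the same argument treats any $\gamma$'') has a gap. For a non-translation the displacement $|\gamma(x)-x|$ depends on $x$. A point $x\in\mathrm{int}\,S$ with small displacement gives $\gamma(S)\cap\mathrm{int}\,S\neq\emptyset$ only if also $\mathrm{dist}(x,\partial S)>|\gamma(x)-x|$. Your condition ``less than the inradius scale of $S$'' does not provide this, and nothing in (1)--(3) guarantees that such a pair $(S,x)$ exists. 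A half-turn about a point lying on supertile boundaries at every level would move nearby points arbitrarily little while exchanging the supertiles on either side, so it would fix no supertile. The implication ``small displacement at a point of $S$ $\Rightarrow$ $\gamma(S)=S$'' is therefore false as stated. For a putative screw motion you give no reason why a suitable $(S,x)$ should exist. That part of Step 2 should be discarded in favour of the reduction to $\gamma^m$, whose displacement is the constant $\|w\|$.

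Once you have reduced to translations, your ``main obstacle'' disappears, and the remedies you propose for it should be dropped. A translation by $\tau$ moves every point by $|\tau|$, so no fixed base point or nested sequence $S_k(x)$ is needed. For large $k$ take \emph{any} $S\in\mathcal{S}_k$ and the centre $x$ of a ball of radius $>|\tau|$ inside it; such a ball exists because level-$k$ supertiles have inradius at least $2^k r_0-\varepsilon$. Then $x+\tau\in\mathrm{int}\,S\cap\mathrm{int}\,\gamma(S)$, so $\gamma(S)=S$. Since $S$ is bounded, $S+\tau=S$ forces $\tau=0$; your centroid argument and the paper's $M=\sup_{y\in S}\langle y,\tau\rangle$ argument both work. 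This is the paper's proof.

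By contrast, your claim that the origin is eventually deep inside $S_k(0)$ unless it lies on boundaries at all levels is false. A point at distance $<|\tau|$ from a fault hyperplane stays within $|\tau|$ of $\partial S_k(0)$ for every $k$, so ``choosing the base point generically'' does not help. Your ``classical route'' needs all level-$k$ supertiles of a given orientation to lie in a \emph{single} coset of $2^k\mathbb{Z}^N$; a merely bounded offset does not force $\tau=0$. That single-coset property is not among hypotheses (1)--(3). With the translation case done as above and the reduction to $\gamma^m$, your Steps 1 and 3 complete the proof exactly as in the paper.
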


\begin{proof}
Let $\gamma(x) = R x + v \in \mathrm{Aut}(\mathcal{T})$. By (3), $\gamma$ preserves each level-$k$ partition. By (2), $R \in B_N$ has finite order $m$, and $\gamma^m(x) = x + w$ with $w = \sum_{j=0}^{m-1} R^j v$. Suppose $w \ne  0$. Supertiles of level $k$ contain balls of radius $r(S_k) \ge  2^k r_0 - \varepsilon \to \infty$, so for large $k$ some $S \in \mathcal{S}_k$ contains a point $x$ with $\mathrm{dist}(x, \partial S) > ||w||$; then $x + w \in \mathrm{int}(S)$, so $\gamma^m (S) \cap \mathrm{int}(S) \ne  \emptyset$, and since $\gamma^m$ permutes the interior-disjoint level-$k$ supertiles, $\gamma^m (S) = S$, i.e. $S + w = S$. Taking $M = \sup_{y \in S} \langle y, w \rangle$, finite because $S$ is bounded and non-empty, $S + w = S$ gives $M = M + ||w||^2$, so $w = 0$: a contradiction. Hence $\gamma^m = \mathrm{\mathrm{id}}$; in particular a pure translation ($m = 1$) is trivial, and $\ker L = {\mathrm{\mathrm{id}}}$. Injectivity of $L$ embeds $\mathrm{Aut}(\mathcal{T})$ into $B_N$, giving the bounds; if all tiles have the same handedness, $L(\mathrm{Aut}(\mathcal{T})) \subseteq B_N^+$.
\end{proof}

For $N = 3$ and the marked tile of Theorem A, hypotheses (1) and (3) are supplied by Proposition~\ref{prop:cert}, and (2) is part of the model.

\section{Dimensional summary}

Table~\ref{tab:dim} summarises the situation dimension by dimension.

\begin{table}[t]\centering\small
\begin{tabular}{cp{2.6cm}ccc p{4.3cm}}\toprule
\textbf{$N$} & \textbf{prototile} & \textbf{volume} & \textbf{children} & \textbf{panels} & \textbf{status / bound} \\ \midrule
2 & L-tromino & 3 & 4 & 8 edges & 2-tile enforcement known \cite{gs99b} \\
3 & Chair44; marked $C_3$ of Theorem A & 7 & 8 & 24 & verified; $|\mathrm{Aut}| \le  24$ \\
4 & tesseract chair & 15 & 16 & 64 & Problem~\ref{prob:N4}; conditional bound 192 \\
5 & penteract chair & 31 & 32 & 160 & open; conditional bound 1920 \\
$N$ & hypercube chair & $2^N - 1$ & $2^N$ & $N\,2^N$ & conditional bound $2^{N-1} N!$ \\
\bottomrule\end{tabular}
\caption{The chair in dimensions 2--5 and the status of matching rules for it.}\label{tab:dim}\end{table}

\section{Future work}

\begin{enumerate}
\item \textbf{Settle Problem~\ref{prob:N4}.} The decisive open question is whether a coarsening-closed frame assignment of $C_4$ exists. Two routes look feasible: (a) a conceptual derivation of the marking in the spirit of Goodman-Strauss's corner/socket description \cite{gs2026}, which would give a candidate for every $N$ at once and can be tested with \texttt{cert6.py} in seconds; (b) a complete search of the $12^15$ homochiral assignments using stronger pruning, e.g. by requiring closure of the restricted rule on each three-dimensional face of the supertile, or by a SAT/SMT encoding of condition (C1). Theorem B suggests that a solution, if it exists, is isolated and highly structured, so symmetry-based enumeration over larger subgroups $\Gamma \le  S_4$ is worth completing.

\item \textbf{Relax the assumptions of the model.} Our certificate assumes lattice registration and translated central child ($\pi_0 = \mathrm{\mathrm{id}}$). Dropping the second is a factor-24 larger search; dropping the first requires enumerating off-lattice contacts, as in the polyhedral realisations of \cite{tsiokos,flicker}.

\item \textbf{From marked tile to polyhedron.} Carry out the key-and-lock construction of Section 10 explicitly for the rules $F^*$ of Theorem A, and prove that the resulting polyhedron forces lattice registration; compare the result with the Chair44 shape.

\item \textbf{Smaller rules.} The 135 facet triples of $F^*$ are what the full-frame marking produces; the minimal number of distinct facet labels (and whether the 14 admissible-but-never-occurring poses of Appendix A can be excluded by a local rule) is open.

\item \textbf{Other rep-tiles.} The formalism uses nothing specific to the chair beyond a substitution with a lattice-registered patch; applying it to other $N$-dimensional rep-tiles (e.g. the notched cubes of \cite{gs99}) may produce further marked monotiles or show that self-similarity of the marking fails there too.
\end{enumerate}

\section{Conclusion}

The frame-marking formalism turns the search for chair matching rules into finite computations in every dimension. In three dimensions it recovers the Chair44 mechanism from first principles, certifies it in two seconds, and shows the marking is forced by self-similarity. In four dimensions the same computations show that the obvious generalisations are \emph{not} self-similar and isolate the precise finite problem that remains. We hope the supplied code lowers the cost of settling Problem~\ref{prob:N4}.

\bibliographystyle{plain}
\bibliography{refs}

\section*{Appendix A: the 44 admissible relative poses for $N = 3$}

Relative poses $(g, t)$ of a touching tile with respect to the reference tile (cubes at $2u + 1$, $u \in U_3$); $g$ is written by the images of $x, y, z$, e.g. $(+y -z -x)$ maps $(x,y,z) \mapsto (y, -z, -x)$; "H" marks the 30 poses that occur in hierarchical tilings, "·" the 14 admitted by the rules but never occurring.

\begin{multicols}{3}\scriptsize\begin{verbatim}
· t=(-2, 2, 2) g=(+x-y-z)
H t=(-2, 2, 2) g=(+y-z-x)
· t=(-2, 2, 2) g=(+z-x-y)
H t=(-1, -1, -1) g=(+x+y+z)
H t=(-1, -1, 3) g=(+x+z-y)
H t=(-1, 3, -1) g=(+z-y+x)
H t=(-1, 3, 3) g=(+y-z-x)
H t=(0, 0, 0) g=(-z+y+x)
H t=(0, 0, 0) g=(+z-y+x)
H t=(0, 0, 0) g=(+z+y-x)
H t=(0, 0, 4) g=(+x+z-y)
H t=(0, 0, 4) g=(+y+x-z)
· t=(0, 0, 4) g=(+z+y-x)
H t=(0, 4, 0) g=(+x-z+y)
H t=(0, 4, 0) g=(+y-x+z)
H t=(0, 4, 0) g=(+z-y+x)
H t=(1, 1, 1) g=(+x+y+z)
H t=(1, 3, 1) g=(+x-z+y)
H t=(1, 3, 1) g=(+y-x+z)
H t=(1, 3, 1) g=(+z-y+x)
H t=(2, -2, 2) g=(-x+y-z)
· t=(2, -2, 2) g=(-y+z-x)
· t=(2, -2, 2) g=(-z+x-y)
· t=(2, 2, -2) g=(-x-y+z)
· t=(2, 2, -2) g=(-y-z+x)
H t=(2, 2, -2) g=(-z-x+y)
· t=(2, 2, 2) g=(-x-y+z)
H t=(2, 2, 2) g=(-x+y-z)
· t=(2, 2, 2) g=(+x-y-z)
· t=(2, 2, 2) g=(-y-z+x)
H t=(2, 2, 2) g=(-y+z-x)
· t=(2, 2, 2) g=(+y-z-x)
· t=(2, 2, 2) g=(-z-x+y)
H t=(2, 2, 2) g=(-z+x-y)
· t=(2, 2, 2) g=(+z-x-y)
H t=(3, -1, -1) g=(-y+x+z)
H t=(3, -1, 3) g=(-x+y-z)
H t=(3, 1, 3) g=(-x+y-z)
H t=(3, 1, 3) g=(-y+z-x)
H t=(3, 1, 3) g=(-z+x-y)
H t=(3, 3, -1) g=(-z-x+y)
H t=(4, 0, 0) g=(-x+z+y)
H t=(4, 0, 0) g=(-y+x+z)
· t=(4, 0, 0) g=(-z+y+x)
\end{verbatim}\end{multicols}

\section*{Appendix B: human inquiry record}

This research was conducted through an iterative human--AI collaboration on Arena.ai. Chronological summary of the human prompts by Dmitry Kamenetsky and of what the AI changed in response:
\begin{itemize}
\item \textbf{Prompt 1 (problem conception).} "recently chair44 was discovered. A 3D shape that can tile space non-periodically. Investigate whether the same techniques can be extended to any N-D space." → The AI analysed the Chair44 mechanism, reviewed the corner-deleted hypercube $C_N$, and initiated the multidimensional extension study.
\item \textbf{Prompt 2 (preprint and novelty).} "excellent! Is this a novel result? If so, can you prepare an arXiv paper about it with references and proofs." → Coordinate proofs, literature comparison, and a first formatted preprint.
\item \textbf{Prompt 3 (authorship and diagrams).} Add the author and the AI to the author list; state that the work was done by the AI on the author's suggestion; add diagrams of the $N$-dimensional tiles and tilings; produce a PDF. → Authorship metadata and contribution statement, vector diagrams, Typst compilation toolchain.
\item \textbf{Prompt 4 (refinement and citations).} Reference numbers for Goodman-Strauss and Flicker; quotation marks in "einstein"; repositioned figure text; expanded boxes; novelty text; clipping fix; cleaned references. → Figure text placement, citation anchors, diagram clipping, metadata.
\item \textbf{Prompt 5 (tiling visualisations, appendix, verification).} Figures of actual tilings; this appendix; wording of the contribution statement; verification of all proofs and formatting. → Space-filling patches for 2D, 3D and 4D cross-sections, Appendix A of that version, mathematical audit.
\item \textbf{Prompts 6--7 (external review).} "I gave this paper to another AI to review and it said that it cannot be published. Here is its feedback. Can you address the feedback and fix the paper? Can we still salvage some of it that is novel for a publication?" → Corrected $T_0$; fixed the inradius/supremum chain so that it proves $\gamma^m = \mathrm{\mathrm{id}}$; derived finiteness and the symmetry bound via the injective projection $L : \mathrm{Aut}(\mathcal{T}) \to O(N)$; cited Goodman-Strauss (1999); renumbered references; reframed $N \ge  4$ as a programme; aligned all claims with the expository scope.
\item \textbf{Prompt 8 (research contribution).} "Can you add the corrections and make it a genuine research contribution." → The frame-marking formalism, the finite certificate (Proposition~\ref{prop:cert}), the $N = 3$ certificate (Theorem A) and rigidity scan (Theorem B), the $N = 4$ computations and Problem~\ref{prob:N4}, the accompanying code.
\item \textbf{Prompt 9 (presentation).} Shorter title; removal of the internal "what is new" subsection in favour of a contributions section; typographic fixes (theorem environments, section numbering, stray symbols); proofs of Theorems A and B written out; unbreakable tables; GitHub repository; appendices after the references; figure of the supertile re-oriented to show the socket. → All applied; repository \texttt{dimkadimon/Monotile-RN} created and linked.
\item \textbf{Prompt 10 (completion of the $Z_4$ search).} "Yes update the table once the code finishes running." → The $Z_4$-symmetric family (6912 assignments) was completed and the table updated; duplicated cross-reference words removed; version number, figure caption and future-work section added.
\item \textbf{Prompt 11 (accessibility).} Keys-and-locks figure restored; contributions section rewritten for a general audience; two-line title; matching colours in Figure 1; table captions and references; this detailed record.

\item \textbf{Prompt 12 (novelty and LaTeX).} Each contribution in Section~\ref{sec:contrib} tagged as new or previously known, with the relevant citations; Figure 1 caption corrected ($T_{000}$ is blue); a LaTeX source of the paper generated from the Typst source for arXiv submission (\texttt{paper.tex}, \texttt{refs.bib}, in the repository).

\item \textbf{Prompt 13 (figure clean-up).} The tilings figure was judged confusing and removed; the blocks in Figure 1 were made semi-transparent.

\item \textbf{Prompt 14 (chair figure).} Figure 1 reverted to opaque tiles; a new figure of a single chair $C_3$ (the Chair44 shape) added in Section~\ref{sec:polytope}; "Thm. Theorem A" corrected in the dimensional table; the code-availability sentence moved to the end of the abstract.

\item \textbf{Prompt 15 (hat and exploded view).} The chair figure reduced to a single opaque chair, with the hat added for comparison; an exploded view of the eight children added.

\item \textbf{Prompt 16 (merged figure).} The exploded view, the assembled supertile and the frame table merged into one figure (Figure~\ref{fig:sup}); $T_0$ displaced in the exploded view so that $T_{000}$ is visible; a rendering artefact at the contact of $T_0$ and $T_{011}$ removed by drawing all faces in a single depth-sorted collection.

\item \textbf{Prompt 17 (this version).} The single chair in Figure~\ref{fig:chair} re-oriented so that the socket faces the reader, matching $T_0$ in Figure~\ref{fig:sup}; arXiv subject classes suggested.
\end{itemize}

\end{document}